\documentclass[12pt]{amsart}
\usepackage{amsfonts}

\usepackage[letterpaper]{geometry}
\usepackage{amsmath}
\usepackage{mathtools}
\usepackage{commath}

\usepackage{amssymb}
\setlength{\topmargin}{0pt}
\setlength{\headheight}{28pt}

\setlength{\oddsidemargin}{0pt}
\setlength{\evensidemargin}{0pt}
\setlength{\headsep}{18pt}

\def\R{\mathbb{R}}

\def\N{\mathbb{N}}

\newtheorem{thm}{\bf Theorem}[section]
\newtheorem{lemma}[thm]{\bf Lemma}
\newtheorem{prop}[thm]{\bf Proposition}

\theoremstyle{definition}
\newtheorem*{defi}{\bf Definition}

\newtheorem*{example}{\bf Example}

\begin{document}

\title[On the structures of subset sums in higher dimension] {On the structures of subset sums in higher dimension}

\author[]{Norbert Hegyv\'ari}
\address{Norbert Hegyv\'{a}ri, ELTE TTK,
E\"otv\"os University, Institute of Mathematics, H-1117
P\'{a}zm\'{a}ny st. 1/c, Budapest, Hungary and associate member of the Alfr\'ed R\'enyi Institute of Mathematics, H-1364 Budapest, P.O.Box 127.}
 \email{hegyvari@renyi.hu}

 \author[]{M\'at\'e P\'alfy}
\address{M\'at\'e P\'alfy, ELTE TTK,
E\"otv\"os University, Institute of Mathematics, H-1117
P\'{a}zm\'{a}ny st. 1/c, Budapest, Hungary}
 \email{palfymateandras@gmail.com}

\author[]{Erfei Yue}
\address{Erfei Yue, ELTE TTK,
E\"otv\"os University, Institute of Mathematics, H-1117
P\'{a}zm\'{a}ny st. 1/c, Budapest, Hungary}
 \email{yef9262@mail.bnu.edu.cn}

\large

\maketitle

\begin{abstract}
A given subset $A$ of natural numbers is said to be complete if every element of $\N$ is the sum of distinct terms taken from $A$. This topic is strongly connected to the knapsack problem which is known to be NP complete. The main goal of the paper is to study the structure of subset sums in a higher dimension. We show 'dense' sets and generalized arithmetic progrssions in subset sums of certain sets.

AMS 2010 Primary 11B30, 11B39, Secondary 11B75

Keywords: Structure of subset sums, thin bases, Szemer\'edi-Vu, Chen theorem on complete sequences, generalized arithmetic progression, matching problems
\end{abstract}

\section{Introduction}
The classical result of Lagrange's theorem states that the squares form a base of order 4, i.e. every integer can be written as the sum of four squares. 

Let $\N$ denote the set of positive integers, i.e. let $\N=\{1,2,\dots\}$. The set $A\subseteq \N$ is a basis, of order $h$ if every sufficiently large integer can be represented as the sum of exactly  $h$ not necessarily distinct elements of $A$. An easy counting argument shows that if $A$ is a basis of order $h$ then $\liminf_{n\to \infty}\frac{A(n)}{n^{1/h}}>0$.
A basis $A$ of order $h$ is said to be a thin basis if $\limsup_{n\to \infty}\frac{A(n)}{n^{1/h}}<\infty$. Let $S=\{1,4,9,\dots\}$ denote the squares. Wirsing showed in [W] that there exists an 'almost' thin basis of squares, proving that there exists an $S'\subseteq S$ such that $|S'\cap [1,x]|\ll (x\log x)^{1/4}$ and $S'$ is a basis of order four. ($f\ll g$ is the Vinogradov notation; it means that there is an absolute constant $C$ such that $|f|\leq C|g|$. $q\gg f$ means $f\ll g$). Spencer also gave a quick proof of this statement (see [S]).
For thin basis see Nathanson's paper [N] and the references therein to other authors.

For any $X\subseteq \N^k$ let
\begin{equation}\label{1}
FS(X):=\{\sum_{i=1}^\infty\varepsilon_ix_i: \ x_i\in X, \ \varepsilon_i \in \{0,1\}, \ \sum_{i=1}^\infty\varepsilon_i<\infty\}\end{equation}
Erd\H os called a sequence $A\subseteq \N$ {\it complete} if  every sufficiently large integer belongs to $FS(A)$. 

If $A\subseteq \N^k$ there are several definition of completeness (see [GFH, H1, H2]). We say that a set $X$ is {\it complete respect to the region} $R\subseteq \N^k$ if for every $z\in R$, $z\in FS(X)$ also holds. 

One of the main notion in our paper is the concept of a thin subset sum. It is known that every positive integer can be written in base 2, in other words, the set $\{2^n\}^\infty_{n=0}$ is a complete set. On the other hand if a set $A=\{a_1<a_2<\dots\}$ is a complete set  and $a_k\leq N< a_{k+1}$  then $k\geq \log_2N+c_0$ since there are $2^k$ many subset sums and $FS(A)$ covers the interval $[T,N]$ with some $T>0$.

Similarly in a higher dimension, the necessary condition that the subset sums of a subset $X\subseteq \N^k$ represent all far points of $\N^k$ should be the condition $X(N)>k\log_2N+t_X$ for some $t_X$, i.e. $X$ is complete respect to the region $R=\{x=(x_1,x_2,\dots,x_k):x_i\geq r_i\}$, $r_i\in \N$, $i=1,2,\dots, k$. In the case of $k=1$, the first named author gave a sufficient condition of a "thin" complete subsequence, (i.e. the existence of $B\subseteq A$ with $B(N)=\log_2N+t_B$ and $B$ is complete). This is analogous to the problem of thin basis (see [H2]).

\medskip

\begin{defi}
Let $A=\{a_1<a_2<\dots< a_n<\dots\}$ be an infinite sequence of integers. We say that $A$ is {\it weakly thin} if $\limsup_{n\to \infty }\frac{\log a_n}{\log n}=\infty$, or equivalently $A(n):= \sum_{a_i\leq n}1=n^{g(n)}$, where $A(n)$ is the counting function of $A$ and $\liminf_{n\to \infty }g(n)=0$. A set $B\subseteq \N$ is said to be {\it thick} if it is not weakly thin.
\end{defi}
We define the notion of a thin set in a higher dimension:
\begin{defi}
Let $X\subseteq \N^k$. 
$X$ is said to be {\it thin complete set} respect to $R$ if  $X(N)\leq log_2R(N)+t_X$ for some $t_X$ and $FS(X)\supseteq R$. In higher dimension $R(N)$  denotes the number of lattice points of $R\cap [1,N]^k$. If $R$ is a finite set we use $|R|$ for it. 
\end{defi}
Let us note that instead of the cube $[1,N]^k$, we can also use a sphere $B_N$ of dimension $k$ with radius $N$. The two counting functions differ only in one constant when considering (weakly) thin sequences.

It is not at all clear whether there exists 'thin' subset sums of any region of $\N^k$. 

It will turn out that if none of the tangents of $R$ are parallel to the axes, then there exists such a 'thin' subset sums. More precisely, the  region $R$ can be covered by a set of thin subset sums. (See the precise formulation in Proposition 1.1 and Theorem 1.2). 

\smallskip

In section 3 we study the structure of reducible subset sums of splittable sequences in $\N^2$. A sequence $A=\{a_i\}$ is splittable if any element $a_i$ can be split into sum of earlier elements, if $i$ is big enough. Par excellence $\{2^k\}_{k=0}$, the Fibonacci sequence e.t.c are splittable.

\smallskip

In section 4 we are going to study the structure of the subset sum of  $\{a_m\}\times\{b_k\}_{m,k\in \N}$ where $\{a_m\}$ is denser. We show that under some condition $FS(\{a_m\}\times\{b_k\})$ contains generalized arithmetic progressions and 'dense' rectangles. 

The difficulty of finding $FS(A\times B)$ structures is to match equal parts between $A$ and $B$ elements and to show the desired structures.  In the present work we can avoid this tool used in [BHP] and [GFH] (the so-called "Matching Principle").

\bigskip

\subsection{Thin subset sums} Denote by $f_i$ the $i^{th}$ axis of $\N^k$, i.e. let $f_i=\{(0,\dots,0, \alpha_i, 0,\dots 0):\alpha_i\in \N\}$ (recall that $\N$ denotes the positive integers).
In section 2 we prove
\begin{prop}\label{T1}
Let $z\in \N^k$, $k\geq 2$ and let $R=z+\N^k$. Suppose that $A$ is complete with respect to $R$. Then all projections of $A$ onto $f_i$ are thick.
\end{prop}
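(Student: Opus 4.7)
The plan is to fix an axis, say $f_1$, and exhibit a polynomial lower bound of the form $|\pi_1(A)\cap [1,M]|\gg M^{1/c}$ for some constant $c$ depending only on $z$. Consider the one-parameter family of points $y^{(N)} := (N,\, z_2+1,\, z_3+1,\, \dots,\, z_k+1) \in R$ for $N\geq z_1+1$. Completeness of $A$ with respect to $R$ gives a finite $S_N\subseteq A$ with $y^{(N)}=\sum_{a\in S_N} a$. For every $j\geq 2$ we have $\sum_{a\in S_N}\pi_j(a)=z_j+1$, while each $\pi_j(a)\geq 1$, so $|S_N|\leq z_j+1$; setting $c:=\min_{j\geq 2}(z_j+1)$ we obtain $|S_N|\leq c$, a bound independent of $N$.

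Projection to the first coordinate yields $\sum_{a\in S_N}\pi_1(a)=N$, expressing $N$ as a sum of at most $c$ values drawn from $\pi_1(A)$ (with repetition, since distinct elements of $A$ may share their first coordinate). Restricting to $N\in[z_1+1,\,z_1+N_0]$, the corresponding first-coordinate multisets take their values in $\pi_1(A)\cap[1,z_1+N_0]$, and they are pairwise distinct because their sums are. Setting $b:=|\pi_1(A)\cap[1,z_1+N_0]|$, the number of multisets of size at most $c$ from a set of cardinality $b$ is $\binom{b+c}{c}\leq (b+c)^c/c!$, and since at least $N_0$ such multisets occur one deduces $b\gg N_0^{1/c}$. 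Writing $M=z_1+N_0$, this becomes $|\pi_1(A)\cap[1,M]|\gg M^{1/c}$ as $M\to\infty$, which is polynomial growth; hence $\pi_1(A)$ is thick. The argument is coordinate-symmetric (the constant becomes $\min_{j\neq i}(z_j+1)$ when working on axis $f_i$), so the same conclusion holds for every projection.

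The main obstacle — which, once noticed, makes the rest routine — is the idea of pinning the coordinates $j\geq 2$ of the target at their smallest admissible values $z_j+1$ in order to force $|S_N|$ to be uniformly bounded. This step converts the global completeness of $A$ into an additive-basis property for $\pi_1(A)$ of bounded order $c$, from which polynomial (and hence thick) growth follows by the classical pigeonhole bound on the number of bounded-order multiset sums.
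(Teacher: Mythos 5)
Your proposal is correct and rests on the same central idea as the paper's proof: pin the coordinates $j\geq 2$ of a family of target points at their minimal admissible values $z_j+1$, so completeness forces representations of uniformly bounded length $c=\min_{j\geq 2}(z_j+1)$, and then count. The only difference is the direction of the argument: you run it directly, pigeonholing the $N_0$ distinct sums against the at most $\binom{b+c}{c}$ multisets of size $\leq c$ to get the explicit bound $|\pi_1(A)\cap[1,M]|\gg M^{1/c}$, while the paper argues by contradiction from the weakly-thin hypothesis (choosing a scale $N$ where $A_1(N)$ is small and exhibiting a single unrepresentable point $(x_1,z_2+1,\dots)$) — these are the same counting argument read in opposite directions, with yours additionally making the polynomial lower bound explicit.
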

Note that in the proposition above the boundary of the domain $R$ contains half-lines parallel to the axis. In the next theorem, we will show that this was the only reason why a region does not have a thin complete subset.
\begin{thm}\label{T2}
Suppose~$v_1,\ldots,v_k\in\mathbb{N}^k$ are integer vectors, and they are linearly independent in~$\mathbb{R}^n$.
In addition, suppose none of~$v_i$'s is parallel to a coordinate axis. 
Let~$U$ be the cone generated by them, that is,
\begin{equation*}
U:=\{a_1v_1+\cdots+a_kv_k \in\mathbb{R}^k\mid  a_i\in \R_{\geq 0}, \ i=1,2\dots,k\},
\end{equation*}
and~$V=U\cap\mathbb{N}^n$ be the collection of all integer points in it. 
Let~$X=S\cup(X_1\cup\cdots\cup X_k)$, where~$S$ is the set of all integer points in the simplex generated by~$kv_1,\ldots,kv_k$, and~$X_i=\{v_i,2v_i,4v_i,8v_i,\ldots\}$. 
Then~$X$ is thin, and~$V=\mathrm{FS}(X)$.
\end{thm}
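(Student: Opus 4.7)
The plan is to verify three things: that $X$ is thin, that $FS(X)\subseteq V$, and that $V\subseteq FS(X)$. The first inclusion is essentially automatic, since every generator $v_i$, every $2^jv_i$, and every lattice point of the simplex with vertices $0,kv_1,\ldots,kv_k$ lies in $U$, so $X\subseteq V$; and $V$ is closed under addition (it is the set of integer points in a convex cone), so $FS(X)\subseteq V$.

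For thinness I would bound $X(N)$ directly. Each geometric progression $X_i$ contributes at most $\log_2 N+O(1)$ points to $X\cap[1,N]^k$, and $S$ is a fixed finite set, so $X(N)\le k\log_2 N+O(1)$. A standard lattice point estimate for the full-dimensional cone $U$ (full-dimensional because the $v_i$ are linearly independent in $\R^k$) gives $V(N)=\Theta(N^k)$, hence $\log_2 V(N)=k\log_2 N+O(1)$. Combining, $X(N)\le\log_2 V(N)+t_X$ for a suitable constant $t_X$, so $X$ is thin in the sense of the definition.

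The heart of the argument is $V\subseteq FS(X)$. Fix $w\in V$. Since $v_1,\ldots,v_k$ form an $\R$-basis of $\R^k$, there is a unique expansion $w=\sum_{i=1}^k a_iv_i$ with real coefficients, and the $a_i$ are nonnegative because $w\in U$. Split $a_i=\lfloor a_i\rfloor+\{a_i\}$ and set
\[ r \;:=\; w-\sum_{i=1}^k\lfloor a_i\rfloor v_i \;=\; \sum_{i=1}^k\{a_i\}\,v_i. \]
Then $r$ is a lattice point (difference of lattice points), it still lies in $U$ (nonnegative coefficients), and because $0\le\{a_i\}<1$ for every $i$ we have $\sum_i\{a_i\}<k$, so $r$ lies in the simplex with vertices $0,kv_1,\ldots,kv_k$; hence $r\in S$ (or $r=0$, in which case we drop it). Since each $\lfloor a_i\rfloor$ is a nonnegative integer, its binary expansion writes $\lfloor a_i\rfloor v_i$ as a sum of distinct elements of $X_i$, and assembling everything yields
\[ w \;=\; r\;+\;\sum_{i=1}^k\sum_{j\ge 0}\varepsilon_{i,j}\,2^jv_i,\qquad \varepsilon_{i,j}\in\{0,1\}. \]
All summands are pairwise distinct elements of $X$: linear independence of the $v_i$ gives $X_i\cap X_j=\emptyset$ for $i\ne j$ and separates the terms within each $X_i$; and the coefficient of $v_i$ in $r$ is $\{a_i\}<1\le 2^j$, so $r\ne 2^jv_i$ for any $i,j$. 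Therefore $w\in FS(X)$.

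The step I expect to be the main (though mild) obstacle is verifying that the fractional remainder $r$ always lands in $S$; this is precisely what forces the dilation factor $k$ in the simplex's vertices $kv_i$, since with a smaller factor the bound $\sum_i\{a_i\}<k$ would not suffice to place $r$ inside the simplex. The hypothesis $v_i\in\N^k$ together with no $v_i$ being parallel to a coordinate axis plays an auxiliary role, ensuring that the nonzero lattice points of both the simplex and each $X_i$ actually lie in $\N^k$, so the construction is consistent with the convention $X\subseteq\N^k$.
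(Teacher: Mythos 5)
Your proof is correct, and it takes a genuinely different and noticeably more direct route than the paper's. The paper proceeds by induction on a nested sequence of dilated simplices $U_i$ (generated by $(k+i)v_1,\ldots,(k+i)v_k$), supported by two geometric lemmas: one showing the outer face of $S$ is covered by the translates $v_i+S$, and one showing the dilated simplex decomposes as $S\cup\bigcup_i(v_i+S)$. In the inductive step the paper writes $v=v'+v_l$ with $v'\in V_i$ and then performs a binary ``carry'' in $X_l$ to remove the duplicated $v_l$. Your argument replaces this entire machinery with a single global step: using that $v_1,\ldots,v_k$ is an $\R$-basis, write $w=\sum a_iv_i$, peel off the fractional remainder $r=\sum\{a_i\}v_i$ (which lands in $S$ precisely because $\sum\{a_i\}<k$, explaining the dilation factor $k$), and encode each $\lfloor a_i\rfloor$ in binary to produce distinct elements of $X_i$. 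In effect the paper performs the binary carries one unit at a time along an inductive chain, while you perform them all at once via the binary expansions of the integer parts. The distinctness checks you give — $X_i\cap X_j=\emptyset$ by linear independence, and $r\neq 2^jv_i$ since the $v_i$-coefficient of $r$ is $\{a_i\}<1\leq 2^j$ — are exactly the right ones, and your observation that $r\neq 0$ forces $r\in\N^k$ (because every $v_i$ has strictly positive coordinates) quietly closes the only remaining gap. Both approaches prove thinness identically. Your version is shorter and dispenses with the two covering lemmas entirely, at the modest cost of invoking the (unique) real-coordinate expansion of $w$ in the basis $\{v_i\}$.
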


\section{Proof of Proposition \ref{T1} and Theorem \ref{T2}}

\begin{proof}[Proof of Proposition \ref{T1}] Suppose that, contrary to the assumption, there is $i$ for which the projection $\psi$ of $A$ onto $f_i$ is weakly thin (wlog assume $i=1$). Denote this projection $\psi(A)$ by $A_1\subseteq \N$. Let $z_1$ and $z_2$ be the first and second coordinates of $z$ respectively. Since $A_1$ is weakly thin we have an $N\in \N$, $N>2z_1$ for which $A_1(N)<N^{g(N)}$, where $g(N)<\frac{1}{2(z_2+1)}$. It implies that there is integer $z_1<x_1<N$ for which for any representation $x_1=\sum^t_{j=1}a_{1,j}\in tA_1\cap [1,N]$,  $t>z_2$ always holds. (Note that the numbers $a_{i_j}$ are not necessarily different). Indeed 
$$
|z_2A_1\cap [1,N[|=\big|\big\{\sum^r_{j=1}a_{1,j}\in rA_1: r\leq z_2\big\}\big|=\sum_{i\leq z_2}{A_1(N)+i-1\choose i}\leq 
$$
$$
\leq z_2(N^{g(N)}+z_2)^{z_2}<N-z_1.
$$
We claim that the point $(x_1,z_2+1, \dots)\in R$ does not belong to $FS(A)$. Let us assume the contrary.
$$
(x_1,z_2+1, \dots)=\sum^t_{j=1}(a_{1,j},a_{2,j},\dots).
$$
Recall that $t>z_2$ hence $\sum^t_{j=1}a_{2,j}>z_2+1$, since for all $j$, $a_{2,j}\geq 1$. This leads us to a contradiction. 

\end{proof}

We use geometric arguments to prove Theorem~\ref{T2}. There is an extensive literature on the topic of covering simplexes with homothetic copies. Here we are dealing with simple reasoning.

Suppose~$v_1,\ldots,v_k\in\mathbb{R}^k$ are vectors in general position. 
Let~$S$ be the simplex generated by them, that is, 
$$
S:=\{a_1v_1+\cdots+a_kv_k\mid a_1+\cdots+a_k\leq 1,a_1,\ldots,a_k\geq 0\}, 
$$
and
$$
F:=\{a_1v_1+\cdots+a_kv_k\mid a_1+\cdots+a_k=1,a_1,\ldots,a_k\geq 0\}
$$
be a face of $S$. For a fixed~$\lambda>0$ and each~$i\in[k]$, let~$S_i=\lambda v_i+S$ be the translation of~$S$ along~$v_i$,
and~$F_i=S_i\cap F$. Using some geometric observation, we can prove that if~$\lambda$ is small enough then there is no 'gap' between these~$S_i$'s.  

\begin{lemma}\label{L:face}
If~$\lambda\leq\frac{1}{k}$, then~$F=\bigcup_{i=1}^kF_i$. 
\end{lemma}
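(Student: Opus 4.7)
The plan is to translate the geometric statement into barycentric coordinates on the simplex generated by~$v_1,\ldots,v_k$ and then apply a one-line pigeonhole argument. Because the $v_i$'s are in general position (hence linearly independent), every point of~$F$ can be written uniquely as $x=\sum_{j=1}^k b_j v_j$ with $b_j\geq 0$ and $\sum_j b_j=1$, and similarly points of~$S$ correspond uniquely to tuples $(a_1,\dots,a_k)$ with $a_j\geq 0$ and $\sum_j a_j\leq 1$.

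First I would characterize $F_i=S_i\cap F$ in these coordinates. A point $x=\sum b_j v_j\in F$ lies in $S_i=\lambda v_i+S$ precisely when there exist nonnegative $a_j$ with $\sum a_j\leq 1$ such that $\sum a_j v_j+\lambda v_i=\sum b_j v_j$. By linear independence this forces $a_j=b_j$ for $j\neq i$ and $a_i=b_i-\lambda$; the nonnegativity constraint on $a_i$ becomes $b_i\geq\lambda$, while $\sum a_j\leq 1$ reduces to $\sum b_j\leq 1+\lambda$, which is automatic on~$F$. Thus
\begin{equation*}
F_i=\Bigl\{\sum_{j=1}^k b_j v_j\,\Bigm|\,b_j\geq 0,\ \sum_j b_j=1,\ b_i\geq\lambda\Bigr\}.
\end{equation*}

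Second, for any $x=\sum b_j v_j\in F$, pigeonhole gives some index $i$ with $b_i\geq 1/k$. Under the hypothesis $\lambda\leq 1/k$ we therefore have $b_i\geq\lambda$, so $x\in F_i$. This shows $F\subseteq\bigcup_i F_i$; the reverse inclusion is immediate from the description of $F_i$.

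There is no real obstacle here: once the $S_i\cap F$ condition is translated to the affine constraint $b_i\geq\lambda$, the conclusion is a textbook pigeonhole step. The only point requiring a moment of care is justifying the uniqueness of the barycentric representation, which uses the linear independence of $v_1,\dots,v_k$ assumed in the setup preceding the lemma.
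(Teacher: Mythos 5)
Your proof is correct and follows essentially the same route as the paper: both identify, via the pigeonhole principle applied to the barycentric coefficients summing to $1$, an index $i$ with coefficient at least $1/k\geq\lambda$, and then peel off $\lambda v_i$ to land in $S$. Your write-up is slightly more systematic in first characterizing $F_i$ by the condition $b_i\geq\lambda$, but the underlying argument is the one the paper uses.
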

\begin{proof}
Let~$f=a_1v_1+\cdots+a_kv_k\in F$, where~$a_1+\cdots+a_k=1$. Since~$\lambda\leq\frac{1}{k}$, there is an~$l$ such that~$a_l\geq\lambda$. 
Let~$f'=a_1'v_1+\cdots+a_k'v_k$, where
\begin{equation*}
a_i'=\begin{cases}
a_l-\lambda, & \textrm{~if~} i=l; \\
a_i, & \textrm{~if~} i\neq l. 
\end{cases}
\end{equation*}
Then we have~$a_i'\geq 0$ and~$\sum_{i=1}^k a_i'\leq 1$, 
so~$f'\in S$. Hence~$f=\lambda v_l+f'\in \lambda v_l+S=S_l$, and~$f\in F_l$. 
\end{proof}

\begin{lemma}\label{L:simplex}
Let~$S'$ be the simplex generated by~$(1+\lambda)v_1,\ldots,(1+\lambda)v_k$, that is,
\begin{equation*}
S':=\{a_1(1+\lambda)v_1+\cdots+a_k(1+\lambda)v_k\mid a_1+\cdots+a_k\leq 1,a_1,\ldots,a_k\geq 0\}. 
\end{equation*}
If~$\lambda\leq\frac{1}{k}$, then~$S'=S\cup(S_1\cup\cdots\cup S_k)$. 
\end{lemma}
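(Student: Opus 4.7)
The plan is to verify the set equality $S'=S\cup(S_1\cup\cdots\cup S_k)$ by proving both inclusions, using throughout that $v_1,\ldots,v_k$ form a basis of $\mathbb{R}^k$, so coefficients in the expansion $\sum c_iv_i$ are uniquely determined.

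First I would establish the easy inclusion $S\cup(S_1\cup\cdots\cup S_k)\subseteq S'$. Any point of $S$ is $\sum a_iv_i$ with $a_i\geq 0$ and $\sum a_i\leq 1$, hence equals $\sum\tfrac{a_i}{1+\lambda}\,(1+\lambda)v_i$ with nonnegative coefficients summing to $\sum a_i/(1+\lambda)\leq 1/(1+\lambda)\leq 1$, giving $S\subseteq S'$. A point of $S_i=\lambda v_i+S$ has the form $(\lambda+a_i)v_i+\sum_{j\neq i}a_jv_j$ with $a_j\geq 0$ and $\sum_ja_j\leq 1$; dividing each coefficient by $1+\lambda$ produces nonnegative weights whose sum is $(\lambda+\sum a_j)/(1+\lambda)\leq 1$, so $S_i\subseteq S'$.

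For the reverse inclusion, take $x\in S'$. By linear independence, $x$ has a unique representation $x=\sum c_iv_i$ with $c_i\geq 0$ and $\sum c_i\leq 1+\lambda$. If $\sum c_i\leq 1$, then $x\in S$ and we are done. Otherwise $\sum c_i>1$, and the assumption $\lambda\leq 1/k$ yields $k\lambda\leq 1<\sum c_i$, so by pigeonhole some index $l$ satisfies $c_l\geq\lambda$. For this $l$, the shifted point $x-\lambda v_l=(c_l-\lambda)v_l+\sum_{j\neq l}c_jv_j$ has nonnegative coefficients summing to $\sum c_i-\lambda\leq 1$; thus $x-\lambda v_l\in S$, and therefore $x\in\lambda v_l+S=S_l$.

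The only real obstacle is the routine bookkeeping of translating between expansions of $x$ in the vertex sets $\{v_i\}$ and $\{(1+\lambda)v_i\}$, and keeping the nonnegativity and simplex-sum constraints synchronized. The hypothesis $\lambda\leq 1/k$ enters exactly at the pigeonhole step, in the same spirit as in Lemma~\ref{L:face}. One could alternatively foliate $S'$ into level faces $F_t=\{\sum a_iv_i:\sum a_i=t,\ a_i\geq 0\}$ for $0\leq t\leq 1+\lambda$ and invoke a rescaled form of Lemma~\ref{L:face} on each $F_t$ with $t>1$, but the direct coefficient argument above seems shorter and self-contained.
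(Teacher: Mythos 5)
Your proof is correct. It rests on the same key idea as the paper's — the pigeonhole step enabled by $\lambda\leq 1/k$, producing an index $l$ with coefficient at least $\lambda$ so that subtracting $\lambda v_l$ lands in $S$ — but you apply it directly to the unique coordinates $c_i$ of $x$ in the basis $\{v_1,\ldots,v_k\}$, whereas the paper first normalizes the coefficients to project the point onto the face $F$ and then invokes Lemma~\ref{L:face}. Your version is a mild streamlining: it avoids the normalization detour and the dependence on Lemma~\ref{L:face}, at the cost of repeating the pigeonhole calculation that lemma already encapsulates. You also write out the easy containment $S\cup(S_1\cup\cdots\cup S_k)\subseteq S'$, which the paper leaves implicit; this is worth doing since the statement is an equality. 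Both routes are sound — the paper's structure lets Lemma~\ref{L:face} carry the geometric content so it can be reused, while yours is more self-contained and, incidentally, handles the full range $\sum c_i\leq 1+\lambda$ cleanly rather than only the boundary case $\sum a_i=1$ that the paper's write-up nominally treats.
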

\begin{proof}
Suppose~$s=a_1(1+\lambda)v_1+\cdots+a_k(1+\lambda)v_k\in S'$, where~$a_1+\cdots+a_k=1$. 
Let~$b_i=a_i(1+\lambda)$. If~$s\not\in S$, we have~$1<b:=b_1+\cdots+b_n\leq 1+\lambda$.
Let~$c_i=\frac{b_i}{b}$, and~$t=c_1v_1+\cdots+c_kv_k$. Then~$c_1+\cdots+c_k=1$, so~$t\in F$. 
By Lemma~\ref{L:face}, there is an~$l$ such that~$t\in F_l$. This guarantees~$b_l>c_l\geq\lambda$, and all coefficients of 
\begin{equation}\label{eq1}
s-\lambda v_k=b_1v_1+\cdots+b_nv_n-\lambda v_k
\end{equation}
are non-negative. At the same time, the sum of the coefficients of~(\ref{eq1}) satisfy that~$b_1+\cdots+b_n-\lambda\leq (1+\lambda)-\lambda=1$. Hence~$s\in \lambda v_k+S=S_k$.  
\end{proof}

Now we are ready to prove Theorem~\ref{T2}.

\begin{proof}[Proof of Theorem \ref{T2}]
For~$i\geq 0$, let~$U_i$ be the simplex generated by $$(k+i)v_1,\ldots,(k+i)v_k,$$ and~$V_i=U_i\cap\mathbb{N}^k$. 
Note that~$V=\bigcup_{i=0}^\infty V_i$, so we need to check~$V_i\subseteq\mathrm{FS}(X)$ for every~$i$. 
We will prove it by induction on~$i$. The case of~$i=0$ is trivial. Now we assume that~$V_i\subseteq\mathrm{FS}(X)$. 
Let~$U_{ij}=v_j+U_i$. Then~$\lambda=\frac{1}{k+i}\leq\frac{1}{k}$, and Lemma~\ref{L:simplex} implies
$$
U_{i+1}=U_i\cup(U_{i1}\cup\cdots\cup U_{ik}).
$$
For any~$v\in (V_{i+1}\setminus V_i)\subseteq (U_{i+1}\setminus U_i)$, there is an~$l$ and a~$v'\in U_i$ such that~$v=v'+v_l$. 
Since both~$v$ and~$v_l$ are integer vector,~$v'$ is also an integer vector, so~$v'\in V_i$. 
By the inductive hypothesis,~$v'$ can be expressed as a finite sum of distinct elements in~$X$. 
Add~$v_l$ to it, we can get a finite sum expression of~$v$. There is only one problem need to be settled down:~$v_l$ may occurs~$2$ times in the expression. 
Note that there are finite many of elements of~$X_l$ appear in the expression of~$v'$. 
So we can find an~$m$ such that~$v_l+2v_l+\cdots+2^mv_l$ is in the expression of~$v'$, but~$2^{m+1}v_l$ is not in it. 
Then instead of add~$v_l$ to it, we can replace~$v_l+2v_l+\cdots+2^mv_l$ by~$2^{m+1}v_l$ to get an expression of~$v$. 
Then there is no repeated elements in it. Hence~$V_{i+1}\subseteq\mathrm{FS}(X)$. 

Finally, notice that~$|X(n)|\leq |S|+\sum_{i=1}^k|X_i(n)|\leq c+k\log_2n$, and~$|V(n)|\sim c'n^k$, 
for some constant~$c,c'$ and as~$n\rightarrow\infty$.
Hence~$X$ is thin and we finished the proof. 

\end{proof}

\vfill

\section{On structure of reducible Subset sums of splittable sets  in $\N^2$}

Recall a sequence $A$ is splittable if any element $a_i$ can be split into sum of earlier elements, if $i>i_0$. The simplest example for splittable set is the set of powers of 2.

In this section, then, we consider the subset sum structure of $X$ where
$X:=\{2^m\}_{m =0}^\infty \times \{2^k\}_{k=0}^\infty.$ Our argument can also be applied to other divisible sequences too. This study is motivated by a result from [BHP]. 

We are going to show that there is a structured set which is contained in $FS(X)$, but surprisingly there is also an exceptional set $E$. We will show that the exceptional set contains an arbitrary large 'empty' part (Proposition \ref{empty squares in the exceptional part}), and sometimes a 'dense' part as well (Proposition \ref{dense squares in the exceptional part}).

\begin{prop}\label{E is the interesting part}
Let $E:=\{(a,b)\in \N^2: \ b\leq \log_2 a\}\cup \{(a,b)\in \N^2: \ a\leq \log_2 b\}$. Then $(\N^2\setminus E)\subseteq FS(X)$. 
\end{prop}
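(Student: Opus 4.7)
The plan is to produce an explicit representation $(a,b)=\sum_i(2^{m_i},2^{k_i})$ for every $(a,b)\in\N^2\setminus E$. First I would set $t_a,t_b$ to be the number of $1$'s in the binary expansions of $a$ and $b$ respectively, and by the symmetry of $X$ in its two coordinates assume without loss of generality that $t_a\le t_b$.

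The next step is a flexibility observation about representing $a$. Starting from the binary expansion of $a$ (which has $t_a$ summands) and repeatedly applying the local move $2^k\mapsto 2^{k-1}+2^{k-1}$ whenever some summand has $k\ge 1$, I can produce a representation of $a$ as a sum of exactly $s$ (not necessarily distinct) powers of two for any integer $s$ with $t_a\le s\le a$; the upper end $s=a$ is reached when every summand equals $2^0$. I would then choose $s=t_b$. The hypothesis $(a,b)\notin E$ means $a>\log_2 b$; since $a$ is a positive integer, a quick case split (on whether $b$ is a power of $2$) gives $a\ge \lfloor\log_2 b\rfloor+1\ge t_b$, so the range $t_a\le s\le a$ is non-empty and contains $s=t_b$. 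Hence $a=\sum_{i=1}^{s}2^{m_i}$ for some exponents $m_i\ge 0$, possibly with repetition.

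Meanwhile I keep the binary expansion $b=\sum_{i=1}^{s}2^{k_i}$ with distinct exponents $k_1>\cdots>k_s$, and write
$$(a,b)=\sum_{i=1}^{s}(2^{m_i},2^{k_i}).$$
Every summand lies in $X$, and the pairs $(2^{m_i},2^{k_i})$ are automatically distinct: if $(m_i,k_i)=(m_j,k_j)$ then $k_i=k_j$, and since the $k_i$'s are pairwise distinct by construction $i=j$. In particular no ``matching principle'' is needed; the asymmetric treatment (split one coordinate, keep the other in binary) takes care of distinctness for free, which is the main structural reason the argument works.

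The only real point to watch is Step~2, namely the inequality $a\ge t_b$ extracted from the hypothesis $(a,b)\notin E$. This is precisely calibrated to the defining boundary $a=\log_2 b$ of $E$, and it is exactly what is required for the splitting procedure to produce a representation of $a$ whose length matches the number of binary digits of $b$. Once this calibration is in place, the rest is a straightforward pairing and there is no further obstacle.
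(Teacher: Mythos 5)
Your proof is correct and follows essentially the same strategy as the paper: split the binary representation of the coordinate with fewer $1$'s into exactly as many powers of two as the other coordinate has (using the local move $2^k\mapsto 2^{k-1}+2^{k-1}$), then pair term-by-term against the genuine binary expansion of the other coordinate, with distinctness guaranteed by the distinct exponents on that side. The only difference is cosmetic: you normalize by $t_a\le t_b$ directly, whereas the paper normalizes by $b\le a$ and then splits into the sub-cases $n\le m$ and $n>m$; your choice of WLOG eliminates that case distinction.
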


\begin{proof}
Since the set $X$ is reflected to the line $y=x$, it is enough to show that every $(a,b)\in \N^2\setminus E$ with $b\leq a$ is an element of $FS(X)$. Denote by $n$ and $m$ the number of terms in the dyadic expansion of $a$ and $b$ respectively. First assume that $n \leq m$. Let $a=2^{c_1}+...+2^{c_n}$ be the dyadic expansion of $a$ and $b=2^{d_1} +...+2^{d_m}$ of $b$. Let us note that if $c_i >0$, then we can write 
$$ a=2^{c_1}+...+2^{c_i-1}+2^{c_i-1}+...+2^{c_n},$$
which shows that $a$ can be written as the sum of $n+1$ powers of two. Repeating this process, we can write $a$ as the sum of $n+1,n+2,...,a$ many sums of powers of two (since $a\geq b$ this process is indeed terminates). Since $n \leq m$ we have that $n \leq m \leq a$ and hence we can write $a$ as the sum of powers of two such that we use exactly $m$ many terms (repetitions are allowed). We simply match this terms arbitrarily with the terms that appear in the dyadic expansion of $b$ and summing this pairs will witness that $(a,b) \in FS(X).$

Assuming  $n > m$ we have that $m < n \leq \log_2 a < b$, so we can do the same trick as above, that is, we start with dyadic expansion of $b$ and split a term in to sum of two two powers and we split another term into the sum of two two powers. At some point, $b$ is written as
is the sum of two powers using exactly $n$ members. Then matching these two powers with the two powers that appear in the dyadic expansion of $a$ we are done.
\end{proof}

In the next proposition we show that the above $E$ contains arbitrarily large empty squares.

\begin{prop}\label{empty squares in the exceptional part}

Let $E$ be the same set as in Proposition \ref{E is the interesting part}. For every $D\in \N$ there exists a square $S_D:=\{(s_1,s_2): \ x_0\leq s_1\leq x_0+D; y_0\leq s_2\leq y_0+D \}\subseteq E$ (for some $x_0,y_0\in \N$) such that $FS(X)\cap S_D=\emptyset$.
\end{prop}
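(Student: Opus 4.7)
The plan is to exploit a simple necessary condition for membership in $FS(X)$. Let $s_2(n)$ denote the binary digit sum of $n$, equivalently the minimum number of (not necessarily distinct) powers of $2$ needed to represent $n$. I first observe that if $(a,b)\in FS(X)$ and $(a,b)=\sum_{i=1}^t(2^{m_i},2^{k_i})$ with the pairs $(m_i,k_i)$ pairwise distinct, then $a=\sum_{i=1}^t 2^{m_i}$ is a representation of $a$ as a sum of $t$ powers of $2$, which forces $t\geq s_2(a)$; on the other hand $b=\sum_{i=1}^t 2^{k_i}\geq t$. Combining these gives the obstruction $s_2(a)\leq b$ (and symmetrically $s_2(b)\leq a$, though one side will suffice).

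With this in hand, I would reduce the problem to finding an integer $x_0$ and a parameter $T>D+1$ such that every element of the interval $[x_0,x_0+D]$ has binary digit sum at least $T$, and such that $\log_2 x_0\geq T-1$. Setting $y_0=T-D-1$, every point $(a,b)\in S_D:=[x_0,x_0+D]\times[y_0,y_0+D]$ will then satisfy $b\leq T-1<T\leq s_2(a)$, so $(a,b)\notin FS(X)$; and $b\leq T-1\leq \log_2 x_0\leq \log_2 a$ will place $S_D$ inside the branch $\{b\leq\log_2 a\}$ of $E$.

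For the construction I would pick any strictly increasing nonnegative integers $N_0<N_1<\cdots<N_{T-1}$ with $2^{N_0}>D$, and set $x_0=2^{N_0}+2^{N_1}+\cdots+2^{N_{T-1}}$. Since $D<2^{N_0}$, adding any $j\in\{0,1,\ldots,D\}$ to $x_0$ only affects bits at positions below $N_0$, so $x_0+j$ still carries the $T$ original ones at positions $N_0,\ldots,N_{T-1}$; hence $s_2(x_0+j)=T+s_2(j)\geq T$ for every such $j$. Moreover $x_0\geq 2^{N_{T-1}}\geq 2^{T-1}$, so $\log_2 x_0\geq T-1$ as required. Taking $T=D+2$ (so that $y_0=1$) produces the desired square.

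The real content is the digit-sum obstruction $s_2(a)>b\Rightarrow (a,b)\notin FS(X)$; everything else is bookkeeping. The only mildly delicate step is producing $D+1$ consecutive integers of uniformly large digit sum, and this is resolved by the trick of placing the $T$ ``active'' high bits far above the length $D$ of the perturbation window, so that no carry can ever erode them.
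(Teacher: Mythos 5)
Your proof is correct and takes essentially the same approach as the paper: both rest on the digit-sum obstruction $(a,b)\in FS(X)\Rightarrow s_2(a)\le b$ and on choosing $x_0$ whose binary ones all sit above the perturbation window $[0,D]$ so that adding $j\le D$ cannot reduce the digit count (the paper simply fixes the concrete choice $x_0=\sum_{i=D+1}^{2D+1}2^i$, $y_0=1$). Your version is if anything slightly cleaner, since the strict inequality $s_2(a)>b$ handles the boundary cases $j=0$, $k=0$ explicitly.
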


\begin{proof}
First let us remark that the shortest binary representation of a natural number - allowing repetition as well - is the dyadic expansion (indeed if there are two equal terms, say $2^i$ occurs at least twice, then one can replace it by $2^{i+1}$ shortening the number of terms, and so on).

Let $D\in \N$, and for the square $S_D$ we define the lower left corner of $S_D$ $(x_0,y_0)$ as follows: let $x_0:=\sum_{i=D+1}^{2D+1}2^i$ and $y_0:=1$. We show that $(x_0+j,1+k) \notin FS(X)$ for every $1 \leq  j,k \leq D$, so $S_D \subseteq E$. Suppose the contrary that $(x_0+j,1+k) \in FS(X)$ for some $1 \leq j,k \leq D$ and write it as a sum of distinct terms from $X$:
$$
(x_0+j,1+k)= (2^{n_1}, 2^{m_1}) +...+(2^{n_l},2^{m_l}).
$$

Clearly $l \leq 1+k \leq D+1$, on the other and recall that the shortest binary representation is the dyadic expansion, hence $l \geq D+2$, a contradiction.
\end{proof}

Nevertheless the set $E$ is not "empty". It contains "many" lattice points from $FS(X)$:

\begin{prop}\label{dense squares in the exceptional part}
For every $M\in \N$, $M>M_0$, there exists a square $S_M:=\{(t_1,t_2): \ z_0\leq t_1\leq z_0+M; w_0\leq t_2\leq w_0+M \ \}\subseteq E$ (for some $z_0,w_0\in \N$) such that $$|FS(X)\cap S_M|\geq \frac{1}{4}M\log_2M.$$
 $M_0$ can be chosen to $2^{64}$.
\end{prop}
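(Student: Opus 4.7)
The plan is to position $S_M$ with its lower-left corner at a power of $2$, so that every $a \in [z_0, z_0 + M]$ has small popcount, and then use $b$-values that are themselves powers of $2$ to build many easy sum representations. Set $L = \lfloor \log_2 M \rfloor$, $z_0 = 2^{M+2}$, $w_0 = 1$, and $S_M = [z_0, z_0+M] \times [1, M+1]$. The containment $S_M \subseteq E$ is immediate: for every $(a,b) \in S_M$, $b \leq M + 1 < M + 2 = \log_2 z_0 \leq \log_2 a$.

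For the lower bound on $|FS(X) \cap S_M|$, I fix $j$ with $\lceil \log_2(L+2)\rceil \leq j \leq L$ and claim $(z_0 + c,\, 2^j) \in FS(X)$ for every $c \in [0, M]$. Since $c \leq M < 2^{L+1} \leq z_0$, the binary expansions of $z_0$ and $c$ do not overlap, so the popcount $p$ of $a := z_0 + c$ is at most $L + 2$. Let $a = 2^{n_1} + \cdots + 2^{n_p}$ be this binary expansion, with pairwise distinct $n_i$'s. As $1 = \mathrm{popcount}(b) \leq p \leq L + 2 \leq 2^j = b$, the splitting trick from the proof of Proposition \ref{E is the interesting part} produces a representation $b = 2^{m_1} + \cdots + 2^{m_p}$ of $2^j$ as a sum of $p$ powers of $2$ (with repetitions allowed). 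The pairs $(2^{n_i}, 2^{m_i}) \in X$ are pairwise distinct because the first coordinates $2^{n_i}$ are, and their sum is $(a,b)$, so $(a,b) \in FS(X)$.

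Distinct choices of $(j,c)$ yield distinct points of $S_M$. The number of admissible $j$ is at least $L - \log_2(L+2) \geq L/2$ once $L \geq 16$, and for each such $j$ there are $M+1$ admissible $c$'s, so
\begin{equation*}
|FS(X) \cap S_M| \;\geq\; \frac{L}{2}(M+1) \;\geq\; \frac{1}{4} M \log_2 M,
\end{equation*}
using $L \geq \log_2 M - 1 \geq \tfrac12 \log_2 M$ for $M \geq 4$. The threshold $M_0 = 2^{64}$ is far larger than needed and comfortably absorbs the lower-order terms above. The only delicate point---and thus the main obstacle---is to verify that splitting $2^j$ into exactly $p$ powers of $2$ is always possible and that distinctness of the $n_i$'s alone suffices for distinctness of the pairs; both facts follow from the repeated substitution $2^k \mapsto 2^{k-1} + 2^{k-1}$ already used in Proposition \ref{E is the interesting part}, so no new technique is required and the content of the statement lies entirely in the geometric placement of $S_M$ along the boundary of $E$.
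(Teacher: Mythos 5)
Your proof is correct, and it follows the same basic strategy as the paper's: place the square $S_M$ far out along the $t_1$-axis with $z_0$ a single power of $2$, so that every $t_1\in[z_0,z_0+M]$ has popcount at most $\approx\log_2 M$, and then exploit the many dyadic representations of the second coordinate. The containment $S_M\subseteq E$, the role of $\log_2 z_0$, and the size of the threshold are handled the same way in both.

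Where you differ is the counting mechanism. The paper restricts to representations whose second coordinates are all equal to a common $2^f$, so a point of $Z$ has second coordinate $k2^f$ with $k$ equal to the popcount of the first coordinate; it then sums $\binom{R}{k-1}(\lfloor R-\log_2 k\rfloor+1)$ over $k$ and lower-bounds the binomial sum. You instead fix the second coordinate to be a single power $2^j$, and for each such $j$ you show that the \emph{entire} row $\{(z_0+c,2^j):0\le c\le M\}$ lies in $FS(X)$, by invoking the ``split $2^k\mapsto 2^{k-1}+2^{k-1}$'' trick from Proposition~\ref{E is the interesting part} to stretch $2^j$ into exactly $p=\mathrm{popcount}(z_0+c)$ summands, and noting that distinctness of the first coordinates $2^{n_i}$ alone forces distinctness of the pairs in $X$. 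This replaces the binomial-sum estimate by a clean $(\#\text{admissible }j)\times(M+1)$ product and reuses the paper's own splitting lemma rather than a fresh combinatorial count; it also yields the slightly stronger structural statement that full horizontal segments at heights $2^j$ (for $\lceil\log_2(L+2)\rceil\le j\le L$) are contained in $FS(X)\cap S_M$, which the paper's argument does not give directly. Both arrive at the same $\tfrac14 M\log_2 M$ bound, and $M_0=2^{64}$ comfortably covers your requirement $L\ge 16$.
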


\begin{proof}
Let $R$ be defined as $2^R\leq M< 2^{R+1}$. Let
$$
S_M:=\{(t_1,t_2)\in \N^2: \ 2^{2^{R+1}}\leq t_1\leq 2^{2^{R+1}}+M; 0\leq t_2 \leq M  \}
$$
It is easy to see that $S_M \subset E$. Consider the following sub-rectangle of $S_M$, (since this rectangle will have still "many" elements from $FS(X)$):
$$
S_{2^R-1,2^R}:=\{(t_1,t_2)\in \N^2: \ 2^{2^{R+1}}\leq t_1\leq 2^{2^{R+1}}+2^R-1; 0\leq t_2\leq 2^R \}
$$
Consider any element from $FS(X)$ which is a sum of 'horizontal' elements, i.e. $(n,m)=(2^{i_1},2^f)+(2^{i_2},2^f)+\dots +(2^{i_k},2^f)$,  denote this set by $Z$, more formally $Z:=\{(n,k2^f): f \in \mathbb{N} \}$. We will show that $Z \cap S_{2^R-1,2^R}$ still has a lot of elements. By the choice of $S_{2^R-1}$ we have that for  every element $k \in S_{2^R-1,2^R}$: $k \in \{1, \dots R+1 \}$. Furthermore, for a fixed $k \in \{1, \dots R+1 \}$ ($k$ is the number of terms in the representation):
$$
| \{ n:  2^{2^{R+1}}\leq n \leq 2^{2^{R+1}}+2^R-1  \}| = \binom{R}{k-1}. 
$$

Thus if  $(n,m)=(n,k2^f) \in Z\cap S_{2^R-1,2^R}$ we need that $k2^f \leq 2^R$. So $f$ should be the element of the following set: $\{0,1, \dots , \lfloor R-\log_2 k \rfloor \} $. 

Hence $|Z\cap S_{2^R-1,2^R}|$ is at least: 
$$\sum\limits_{k=1}^{R+1} \binom{R}{k-1} (\lfloor R-\log_2 k \rfloor +1 )  \geq \sum\limits_{k=0}^{R} \binom{R}{k} ( R-\log_2 (k+1)  ).$$

Now for $R \geq 6$, we have $\frac{R}{2} \geq \log_2(R+1)$, so

$$\sum\limits_{k=0}^{R} \binom{R}{k} ( R-\log_2 (R+1) ) \geq \sum\limits_{k=0}^{R} \binom{R}{k} \frac{R}{2}= 2^R \frac{2R}{4} \geq \frac{M}{2} \frac{\log_2 M}{2}.$$

\end{proof}

\section{On the set $FS(\{a_m\}\times\{b_k\})$ where $\{a_m\}$ is denser}

In the previous section we saw that the set $E$ contains "many" grid points from $FS(X)$. One can ask what happens if we change one of the factors of the set $X=\{2^m\}\times\{2^k\}_{m,k\in \N}$ to a denser one.

Let $A=\{a_1<a_2<\dots\}$ be a set of positive integers, where $A(n)>n^{1/2}f(n)$, where $f(n)$ tends to infinite as $n$ tends to infinity. 

Furthermore let $B=\{b_1<b_2<\dots\}$ be an infinite sequence of positive integers.
Let $Y=\{a_m\}_{m\in \N}\times\{b_k\}_{k\in \N}$. We will prove that $FS(Y)$ contains well-structured subsets.
\begin{defi}[Finite generalized arithmetic progression]
A finite generalized arithmetic progression or briefly a GAP of dimension $D$ is the set of the form
$$
\{x_0+l_1d_1+l_2d_2+\dots +l_Dd_D: 0<l_i\leq L_i; \ i=1,2,\dots,D\},
$$
where $x_i, l_i,d_i\in \N$ ($i=1,2,\dots,D)$. The GAP is homogeneous if $x_0=0$. The {\it size} of the GAP is $L_1L_2\cdots L_D$, and GAP is proper if the cardinality of the set above is equal of its size.
\end{defi}

\begin{thm}\label{4.1}
Let $T>T_0$ be an integer. 

1. For every $(L_1,L_2,\dots ,L_D)$ $D-$tuples there exists a proper and homogeneous GAP with dimension $D$ and size $L_1L_2\cdots L_D$ contained in $FS(Y)$.

2. There are infinitely many rectangular $R$ such that $|FS(Y)\cap R|\gg|R|\frac{B(T)}{T}$. In particular if $\limsup_{T\to \infty}\frac{B(T)}{T}>0$ then $|FS(Y)\cap R|\gg|R|$ holds infinitely many times.
\end{thm}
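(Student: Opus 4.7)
My plan rests on a simple construction that lifts additive structure from $A$ into $\mathrm{FS}(Y)$: for any $b\in B$ and any $k$ distinct $a_1,\ldots,a_k\in A$, the sum $(a_1,b)+\cdots+(a_k,b)=(a_1+\cdots+a_k,\,kb)$ belongs to $\mathrm{FS}(Y)$, and varying $b$ produces points with different second coordinates so different ``heights'' contribute disjointly to $\mathrm{FS}(Y)$. This observation is what lets us avoid the matching principle used in [BHP] and [GFH].

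For part 1, I would first pick indices $k_1<\cdots<k_D$ in $B$ satisfying the ``mixed-base'' growth $b_{k_j}>\sum_{i<j}L_i b_{k_i}$ for every $j$; since $B$ is infinite such indices always exist, and they will force properness of the resulting GAP. Next I would choose sums $s_1,\ldots,s_D$ inductively, together with, for each $i$, a system of $L_i$ pairs $\{a,a'\}\subseteq A$ of common sum $s_i$, pairwise disjoint and disjoint from the pairs used in previous steps. The key observation is that distinct pairs with a common sum $s$ are automatically vertex-disjoint: from $a+a'=s=a+a''$ one derives $a'=a''$. Since $|A\cap[1,N]|>N^{1/2}f(N)$ produces $\gg Nf(N)^2$ pairs with sums in $[2,2N]$, pigeonhole forces some value to be hit $\gg f(N)^2$ times, eventually larger than any fixed $L_i$; deleting the constantly many vertices used earlier does not spoil the density. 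With $d_i:=(s_i,\,2b_{k_i})$, the multiple $l_i d_i$ is realised in $\mathrm{FS}(Y)$ by summing $l_i$ of the chosen disjoint pairs at height $b_{k_i}$, and combining across dimensions is legal because the chosen pairs are pairwise disjoint and the heights are all distinct. Properness then follows directly from the mixed-base growth of the $b_{k_j}$: if $\sum_i(l_i-l_i')b_{k_i}=0$, considering the largest index where $l_j\neq l_j'$ contradicts the growth inequality.

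For part 2, I would invoke the Szemer\'edi--Vu theorem on long arithmetic progressions in sumsets: since $A(n)>n^{1/2}f(n)$ with $f\to\infty$ forces $|A\cap[1,N]|>N^{1/2+\varepsilon}$ eventually for every $\varepsilon>0$, for arbitrarily large $N$ the sumset $A+A$ contains an arithmetic progression $P$ whose length is proportional to that of some interval $I\subseteq[2,2N]$. Since at most $|A\cap[1,N]|$ of the diagonal terms $2a$ need to be discarded, $|2A^*\cap P|$ has the same order. Setting $R:=I\times[2,2T]$, for every $p\in P\cap 2A^*$ with representation $p=a+a'$ ($a\neq a'\in A$) and every $b\in B\cap[1,T]$, the point $(p,2b)=(a,b)+(a',b)$ lies in $\mathrm{FS}(Y)$, and distinct $b$'s yield distinct second coordinates so these points are all distinct. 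Hence $|\mathrm{FS}(Y)\cap R|\gg|I|\cdot B(T)$ while $|R|\asymp|I|\cdot T$, giving the claimed $|\mathrm{FS}(Y)\cap R|\gg|R|\cdot B(T)/T$; the addendum when $\limsup B(T)/T>0$ is then immediate. Letting $N\to\infty$ produces the required infinite family of rectangles.

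The main obstacle is quantitative and lies in part 2: the density hypothesis by itself only yields $|2A^*|\gg N^{1/2}f(N)$, negligible beside the length of $[2,2N]$, so one genuinely needs the Szemer\'edi--Vu upgrade to extract a long AP of density proportional to the ambient interval. Once this structural input is granted the rectangle counting is routine, and part 1 reduces to the combinatorial observation that common-sum pairs form an automatic matching.
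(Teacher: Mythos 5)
Your Part 1 is correct and takes a genuinely different route from the paper. The paper partitions $A$ into $D$ pieces $A_1,\ldots,A_D$ of comparable density, keeps two elements $b_1,b_2\in B$ fixed throughout, and forces properness via the first coordinate by arranging $d_{i+1}>\sum_{j\le i}L_jd_j$ at each step (this is why they impose $f^2(n)/16D^2>\sum_j d_jL_j$ before extracting $x_{i+1}$). You instead use $D$ distinct heights $b_{k_1}<\cdots<b_{k_D}$ with mixed-base growth $b_{k_j}>\sum_{i<j}L_ib_{k_i}$ and read off properness from the second coordinate, which is free from the infinitude of $B$ and lets you dispense with the lacunarity argument on the $d_i$'s. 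Your observation that common-sum pairs form an automatic matching is the same engine the paper uses, and the pigeonhole/density bookkeeping is sound; the disjointness of pairs \emph{across} levels that you impose is actually unnecessary once the heights differ, but harmless.

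Part 2 has a genuine gap. You invoke Szemer\'edi--Vu to claim that the two-fold sumset $A+A$ contains an arithmetic progression whose length is proportional to an ambient interval $I$, but that is not what the theorem gives under the hypothesis $A(n)>\sqrt{n}\,f(n)$. Their quantitative result produces an AP of length $n$ in the $\ell$-fold sumset $\ell A$ only once $\ell|A|\gg n$; for $|A|\sim\sqrt{n}\,f(n)$ one needs $\ell$ on the order of $\sqrt{n}/f(n)$, so $\ell=2$ is hopeless. The version the paper actually uses (Lemma~4.2) is about $FS(A)$, i.e.\ unbounded subset sums, and in its infinite form it supplies an AP with \emph{fixed} common difference $d$, which is what lets the density in a long interval $I$ stay $\gg 1/d$. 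Your side remark that $A(n)>\sqrt{n}\,f(n)$ with $f\to\infty$ forces $A(n)>n^{1/2+\varepsilon}$ is also false (take $f(n)=\log n$). Because you restrict first coordinates to sums of exactly two elements of $A$, the horizontal count caps at $|A+A|$, which you cannot show is dense; the paper avoids this by splitting $A=A_1\sqcup A_2$, pulling an infinite AP out of $FS(A_1)$, shifting by a fixed $Q$-term sum from $A_2$ so that every $x\in I'$ has $\mathrm{trm}(x)\ge Q+1$, and then spreading vertically via $Q_xB_T$ rather than $2B_T$. The vertical density you obtain is the same $B(T)/T$, but the horizontal structure is missing in your version, so the argument does not close.
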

\begin{proof}[Proof of Theorem \ref{4.1}]

First we prove 1. 

In the first step we prove the existence of an arithmetic progression of length $L_1$. This will be $GAP(1)$. Then successively if $GAP(i)$ is defined then we show the existence of $GAP(i+1)\supseteq GAP(i)$. For the $i^{th}$ step $GAP(i)$ to be proper, it is sufficient to prove that $d_i>\sum^{i-1}_{j=1}L_jd_j$.
Consider the disjoint union of the sequence $A=A_1\sqcup A_2\sqcup \dots \sqcup A_D$, where $A_i(n)>\sqrt{n}f(n)/D$ holds for $i=1,2,\dots, D$. $GAP(i)$ will be constructed on $\sqcup_{j=1}^i A_j$.

We now construct $GAP(1)$ in $A_1$. Let $n$ be an integer for which $f^2(n)/8D^2>L_1$. Write shortly $L=L_1$. $A_1(n)>\sqrt{n}f(n)/D$ and let $A_1\cap [1,n]:=A'_n=\{a_1<a_2<\dots a_v\}\subseteq [1,n]$. Denote $r(x)$ the sum-representation of $A'_n$, $r(x)=\{(a,a'): a+a'=x; \ a< a'; \ a,a'\in A'_n\}$. Now $A'_n+A'_n\subseteq [2,2n]$ and  
$$
nf^2(n)/4D^2\leq |A'_n|^2/4\leq \frac{|A'_n|(|A'_n|-1)}{2}=\sum_xr(x)\leq 
$$
$$
\leq \max_xr(x)|A'_n+A'_n|\leq \max_xr(x)(2n)
$$
and so $r(x_1):=\max_xr(x)>f^2(n)/8D^2$. Hence we have $x_1=a_{j_1}+a_{j_2}=a_{j_3}+a_{j_4}=\dots =a_{j_{{2m-1}}}+a_{j_{2m}}$, where  $m>f^2(n)/8D^2>L$. 

Let $b_1,b_2\in B$ and consider the points from $FS(Y)$: $v=(a_{j_1}+a_{j_2},b_1+b_2)= (a_{j_3}+a_{j_4},b_1+b_2)=\dots =(a_{j_{{2L-1}}}+a_{j_{2L}},b_1+b_2)$. Since $m>L$ and the elements $a_{j_1},a_{j_2},\dots, a_{j_{{2L-1}}},a_{j_{2L}}$ are pairwise different we have that $2v=(a_{j_1}+a_{j_2}+a_{j_3}+a_{j_4},2b_1+2b_2),\dots$, $Lv=(a_{j_1}+a_{j_2}+a_{j_3}+a_{j_4}+\dots +a_{j_{{2L-1}}},a_{j_{2L}},Lb_1+Lb_2)$ are elements of $GAP(1)\subset FS(Y)$  which proves the first step. Note that $d_1=x_1$.

Assume that the $i^{th}$ $GAP(i)$, ($1\leq i< D$), $GAP(i)\supseteq GAP(i-1)\supseteq\dots \supseteq GAP(1)$ has been defined on $\sqcup_{j=1}^i A_j$ and the differences $d_1,d_2,\dots, d_i$ have been determined. We are going to proof of the existence $GAP(i+1)$ on $\sqcup_{j=1}^{i+1} A_j$. First we use the sequence $A_{i+1}$. Actually we follow the previous argument. Our requirement now is that let $f^2(n)/16D^2>\max \{\sum^i_{j=1}d_jL_j, L_{i+1}\}$. As before we get an integer $x_{i+1}$ for which the number of representation of $x_{i+1}$ in the form $a+a'$, $a<a'$ and $a,a'\in A_{i+1}$ is at least $f^2(n)/8D^2>2\max \{\sum^i_{j=1}d_jL_j, L_{i+1}\}$. Since for all integer $x$, $x>2r(x)$ we get that $d_{i+1}:=x_{i+1}>\max \{\sum^i_{j=1}d_jL_j, L_{i+1}\}$ and we can select an arithmetic progression of length $L_{i+1}$. Denote this arithmetic progression by $AP(i+1)$. Finally note that $GAP(i+1)=GAP(i)+AP(i+1)$. Since for every $i\geq 1$ $d_{i+1}>\sum^i_{j=1}d_jL_j$, we can avoid having two different representations in the $GAP(i+1)$, which proves the first part of the theorem.

Note that in the present proof we used only that the set $B$ is non-empty.
\medskip

Now we prove 2. 
For the proof, we need the following deep result of Szemer\'edi and Vu, which Chen also proved independently. (see [SZV] and [C]):
\begin{lemma}\label{4.2}
There is a positive constant $c$ such that the following holds. Any increasing sequence $A=\{a_1<a_2<\dots\}$ with $A(n)>c\sqrt{n}$. We have that $FS(A)$ contains an infinite arithmetic progression.
\end{lemma}
Split $A$ into two parts $A=A_1\bigsqcup A_2$, where $A_1(n)>c\sqrt{n}$  for $n>n_0$, and $A_2=A\setminus A_1$. By Lemma \ref{4.2}, $FS(A_1)$ contains an infinite arithmetic progression, $AP(d)$ with difference $d$. Choose the interval $I=[1,H]$ such that the relative density of this arithmetic progression in $I$ is at least $1/2d$, i.e. $|I\cap AP(d)|>|I|/2d$.  For any $x\in I\cap AP(d)$, denote by $trm(x)$ the maximum number of members taken from $A_1$ in the representation $FS(A_1)$; (formally
$$
trm(x)=\max\Big\{\sum\varepsilon_i\mid x=\sum\varepsilon_ia_i; \ a_i\in A_1, \ \varepsilon_i \in \{0,1\}\Big\}.
$$
Let $Q= \max \{trm(x): x\in I\cap AP(d)\}$. Choose $A_Q:=\{a_1,a_2, \dots ,a_Q\}$ from $A_2$. Since $A_2(n)>\sqrt{n}(f(n)-c)$ this $Q-$tuple exists. Let $A'_1=A_1\cup A_Q$. Clearly $I':=\sum_{1\leq j\leq Q}a_j+(I\cap AP(d))\subset FS(A'_1)$ and for every element $y\in I'$, $trm(y)\geq Q+1$.

Select an element $x$ from $I'$. Write $Q_x=trm(x)$. 
Let $B_T=B\cap [1,T]$. Take $Q_xB_T:=B_T+B_T+\dots +B_T$ ($Q_x$ many times). It is well-known that for every finite set $|A+A|\geq 2|A|-1$ holds and so by induction we have $|Q_xB_T|\geq Q_x|B_T|-(Q_x-1)\gg  Q_x|B_T|$, and clearly $Q_xB_T\subseteq [1, Q_xT]\subseteq [1, 2QT]$, since every $x\in I'$, $Q+1\leq trm(x)\leq 2Q$.

Now we are in the position to  define the rectangle $R$. Let the horizontal segment of $R$ be $I':=[a,b]$ and the length of the vertical segment be $2QT$, i.e. $R=\{[a,b]\times [1, 2QT]\}\cap \N^2$. The number of lattice points in $R$ is $(1+o(1))2|I'|QT$. While 
$$
|FS(Y)\cap R|\geq |FS(A'\times B)\cap R|\gg\sum_{x\in I'}Q_x|B_T| \gg B(T)Q|I'|=
$$
$$
=\frac{B(T)|I'|QT}{T}\gg|R|\frac{B(T)}{T}.
$$

\end{proof}

It has not escaped our attention that in the second part of the theorem, that it would have been enough to assume that $A(n)\geq c\sqrt{n}+B(n)$, where $c$ is optimal  (in the sense that for every $c'<c$ Lemma \ref{4.2}
 does not hold) and $B(n)$ tends arbitrarily slowly to infinity. We just didn't want to complicate our formulation.

\medskip

We ensured arbitrary long arithmetic progression in $FS(Y)$. The question is, does it contain an infinite arithmetic progression?  

Again the answer depends on the 'slope' of the arithmetic progressions (compare with Proposition 1.1). The following example shows that a "horizontal" arithmetic progression is possible.

\begin{example}
Let $S$ be the sequence of square numbers. It is known that $n$ is expressible as a sum of 5 distinct non-zero squares provided $n\geq 1024$ (see e.g. [B]). Let $Z=S\times \{1\}$. Clearly $\{(n,5):\ n\geq 1024\}\subseteq FS(Z)$.
\end{example}
So in the rest we exclude arithmetic progressions which are 'parallel' to the two axes. Let us call this type of arithmetic progression type B. In this case, we can say no more: 

\begin{prop}
For every sequence of integers $A$, there exists an infinite sequence of integers $B$ such that $FS(A\times B)$ does not contain an infinite arithmetic progression. 
\end{prop}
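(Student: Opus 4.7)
The plan is to construct $B$ by diagonalizing against the countable list of all infinite type-B arithmetic progressions $P_1, P_2, \ldots$ in $\N^2$. Each such AP is determined by a quadruple $(x_0, y_0, d_1, d_2)$ with $d_1, d_2 \geq 1$ (for an infinite AP to stay in $\N^2$ the direction components must be nonnegative, and type B rules out either being zero).

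My first step would be to prove the following growth lemma: for every finite $B_0 \subseteq \N$, every point $(X, Y) \in FS(A \times B_0)$ satisfies $X \gg Y^2$, with the implied constant depending only on $B_0$. The argument would run as follows. Write $(X, Y) = \sum_{i=1}^T (a_{m_i}, b_{k_i})$ for $T$ distinct pairs. Then $Y \leq T \cdot \max(B_0)$, and since each index $m$ can appear in at most $|B_0|$ distinct pairs (once per choice of $k$), the number $M$ of distinct $m$-indices satisfies $M \geq T/|B_0|$. Using $a_m \geq m$ (because $A$ is a strictly increasing sequence of positive integers), one gets $X \geq a_1 + \cdots + a_M \geq M^2/2$, and hence $X \gg Y^2$. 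On the other hand, along any type-B AP $\{(x_0 + jd_1, y_0 + jd_2)\}_j$ the ratio $X/Y$ tends to the finite limit $d_1/d_2$, so only finitely many terms of the AP can lie in $FS(A \times B_0)$. In particular $P \not\subseteq FS(A \times B_0)$ for any infinite type-B AP $P$.

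Given the lemma, I would then construct $B = \{b_1 < b_2 < \cdots\}$ inductively. At stage $n$, apply the lemma to $B_0 := \{b_1, \ldots, b_{n-1}\}$ and $P_n$ to select a witness point $q_n = (x_n^*, y_n^*) \in P_n$ with $q_n \notin FS(A \times B_0)$, and set $b_n$ to be any integer strictly greater than both $b_{n-1}$ and $y_n^*$. This second condition "freezes" the witness: in any representation $q_n = \sum_i (a_{m_i}, b_{k_i})$ inside $FS(A \times B)$, each $b_{k_i} \leq y_n^* < b_n$, so every $k_i \leq n-1$, which places $q_n$ in $FS(A \times B_0)$ and contradicts the choice of $q_n$. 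Thus $P_n \not\subseteq FS(A \times B)$ for every $n$, completing the proof.

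The main obstacle is the growth lemma; the diagonalization is standard once one can always find a missing AP-point at a controlled height. The key combinatorial observation powering the lemma is that each $a_m$ can appear in the sum at most $|B_0|$ times (once per $b_k$), which is exactly what forces $M$ to grow proportionally to $T$ and ultimately yields the quadratic lower bound $X \gg Y^2$ --- a growth rate incompatible with the linear growth along any type-B AP.
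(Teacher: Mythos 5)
Your proof is correct and takes essentially the same approach as the paper: both establish that for any finite prefix $B_0$ of $B$ the $y$-coordinate of a point in $FS(A\times B_0)$ is $O(\sqrt{x})$ (equivalently your $X\gg Y^2$), conclude that any type-B AP, having linear $y/x$ growth, eventually leaves $FS(A\times B_0)$, and then diagonalize by choosing the next element of $B$ larger than the $y$-coordinate of a witness point so that the witness stays excluded. Your write-up is in fact a bit more careful than the paper's at the counting step, since you explicitly note that each $a_m$ can appear at most $|B_0|$ times (the paper only spells out the $|B_0|=1$ case cleanly).
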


\begin{proof}
Let us enumerate the set of arithmetic progressions type B (they form a countable set). Write them as $AP_1,AP_2,\dots$. For each $AP_i$ we define an element $b_i\in B$. 
Let $b_1$ any positive integers. We define a function $\alpha_1: \N\mapsto \N$ for which for all $(n,y)\in FS(A\times B)$, $\alpha(n)\geq y$ (note that not only for one $y$ rather for each $(n,y)\in FS(A\times B)$, when $n$ is fixed). Represent $n$ in the way when the $trm(n)$ is maximal. So $n=a_{i_1}+a_{i_2}+\dots +a_{i_t}\geq 1+2+\dots t$, hence $\max trm(n)\leq 2\sqrt{n}$. Hence $\alpha_1(n)=2b_1\sqrt{n}$ is an appropriate function. If $AP_1$ lies on the line $g(x)=y=mx+b$ then there is an $x_0\in \R$, such that for every $x>x_0$, $g(x)>2b_1\sqrt{x}$ . Furthermore, there exists an explicit computable $x_1\in \N$ such that between $(x_0,g(x_0))$ and $(x_1,g(x_1))$ the line contains at least one element from $AP_1$. Call it an exceptional point. Now let $b_2=\min \{m>g(x_1): \ m\in \N\}$.

The further elements of $B$ are defined in a similar way; if $b_i$ has been defined then for all $(n,m)\in FS(A\times \{b_1<\dots <b_i\})$ we have $m\leq (\sum_{1\leq j\leq i-1}b_j)\sqrt{n}$ and then the definition of $b_{i+1}$ is similar as before.

Assume that the sequence $B$ has been defined. Consider say an element $(n,m)\in FS(A\times B)$, $x_0\leq n\leq x_1$. If any $b_i$, $i\geq 2$ is a term of the representation of $m$ then $m\geq b_2$. If  just $b_1$, then $m\leq 2b_1\sqrt{n}$. The given exceptional point in the $AP_1$ is not an element of $FS(A\times B)$.
The further argument is the same for any $AP_i$.
\end{proof}

{\bf Declaration of competing interest}

The authors declare that they have no known competing financial interests or personal relationships that could have
appeared to influence the work reported in this paper.

\smallskip

{\bf Data availability}

No data was used for the research described in the article.

\section*{Acknowledgment}

The first named author is supported by the National Research, Development and Innovation Office NKFIH Grant No K-146387.

\end{document}